\def\={\discretionary{-}{-}{-}}
\newtheorem{theorem}{Theorem}
\newtheorem{lm}[theorem]{Lemma}
\newtheorem{wn}[theorem]{Corollary}
\theoremstyle{remark}
\newtheorem{definition}[theorem]{Definition}
\newtheorem{remark}[theorem]{Remark}
\author{Valentin A. Skvortsov}
\address{Moscow Center for Fundamental and Applied Mathematics\endgraf and \endgraf Mathematical Department\endgraf Moscow State University \endgraf Moscow 119991 \endgraf Russia\endgraf  e-mail: {\tt vaskvor2000@yahoo.com}}
\author{Piotr Sworowski}
\address{Casimirus the Great University\endgraf Institute of Mathematics\endgraf Powsta\'nc\'ow Wielkopolskich 2\endgraf 85-090 Bydgoszcz\endgraf Poland\endgraf e-mail: {\tt piotrus@ukw.edu.pl}}
\title{On the relation between Denjoy--Khintchine and HK$_{r}$-integrals}
\subjclass{26A39}
\begin{document}
\begin{abstract}
We locate Musial\,\&\,Sagher's concept of HK$_r$-inte\-gration within the approximate Henstock--Kurzweil integral theory. If to restrict HK$_r$-integral by requirement that the indefinite HK$_r$-integral is {\em continuous}, then it is included even in the classical Denjoy--Khintchine integral. We provide a direct argument demonstrating that this inclusion is proper.

\smallskip
\noindent Keywords: $L^r$-derivative, HK$_r$-integral, variational measure, Denjoy--Khintchine integral
\end{abstract}
\maketitle
\section{Introduction}
The concept of derivation defined with $L_r$ integral means has been applied to build a counterpart of Perron integration theory by Louis Gordon in \cite{L Gordon,L GordonPhD}. Analogous approach to Riemann integration was introduced and investigated much later in a work by Musial and Sagher~\cite{MusialSagher2004}, related to the first author's PhD dissertation~\cite{MusialphD}. Despite a connection between these Perron and Riemann (Henstock--Kurzweil) type integrals has been established in \cite{MusialSagher2004,MusialphD} ($L^r$-Perron integrability implies HK$_r$-integrability), a true impetus the topic has received only very recently, with a surprising result (co-authored by the first author of the present note) that the HK$_r$-integral is strictly more general than the $L^r$-Perron integral, contradicting an intuition from classical theory; see also \cite{MST1,MST2,MT,MT2,TM}.

In the present note we locate Musial\,\&\,Sagher's concept of HK$_r$-inte\-gration within another counterpart of the Henstock--Kurzweil integral, the so-called approximate Henstock--Kurzweil integral (AH-integral). Having shown here that HK$_r$-integral is included into AH-integral (Corollary~\ref{ehbwb}), we obtain that it is covered by the integrals of a numerous family of approximately continuous integrals (see~\cite{sss}), in particular, it is included in the Kubota integral (see \cite{Kub,Lu1}). Moreover, if to restrict HK$_r$-integral by requirement that the indefinite HK$_r$-integral is {\em continuous}, then it is included even in the classical Denjoy--Khintchine (aka wide Denjoy or D-) integral. We provide a direct argument demonstrating that this inclusion is proper. In fact our construction shows that the HK$_r$-integral does not cover even the Khintchine integral which is more narrow than the D-integral.
\section{Terminology and notation}
\subsection{Riemann approach terminology}
We work in a fixed {\em segment} (compact interval) $[a,b]\subset\mathbb R$ (in some cases we shall use an alternative notation $\langle a,b\rangle$ for a subsegment of $\mathbb R$ meaning that any of the points $a$ and~$b$ can be on the left to the other one). A  {\em tagged interval} is a pair $(I,x)$ where $I\subset[a,b]$ is a segment and $x\in I$ (its {\em tag}). We say that a tagged interval $(I,x)$ is {\em tagged in} a set $E\subset[a,b]$ if $x\in E$, while it {\em is in} $E$ if $I\subset E$. A {\em gauge} is a strictly positive function $\delta$ on $[a,b]$ (or on a subset of $[a,b]$). We say that a tagged interval $(I,x)$ is {\em$\delta$-fine} if $I\subset(x-\delta(x),x+\delta(x))$.

A {\em division} is a finite collection $\pi$ of tagged intervals, where distinct elements $(I',x')$ and $(I'',x'')$ in $\pi$ have $I'$ and $I''$ non-overlapping, i.e., without inner points in common. A division is said to be {\em tagged in} a set/{\em$\delta$-fine} if all its elements have the respective property.
\subsection{$L^r$ related notions}
Throughout this paper we assume that ${r\ge1}$ and consider the respective Lebesgue spaces $L^{r}[a,b]$. $\mu$ denotes the Lebesgue measure on $[a,b]$. We recall the definition of the $L^{r}$-Henstock--Kurzweil integral given in \cite{MusialSagher2004} and some related notions from~\cite{L Gordon}.
\begin{definition}\label{def1}
A function $f\colon[a,b]\to\mathbb R$ is said to be {\em$L^r$-Henstock--Kurzweil integrable} ({\em HK$_r$-integrable}) if there exists a function $F\in L^{r}[a,b]$ such that for any $ \varepsilon >0$ there exists a gauge $\delta$ with the property that for any $\delta$-fine division $\pi=\{([c_i,d_i],x_i)\}_{i=1}^q$,
\[\sum_{i=1}^{q}\left(\frac{1}{d_i-c_i}\int_{c_i}^{d_i}\lvert F(y)-F(x_i)-f(x_i)(y-x_{i})\rvert^r\mkern1.5mu dy\right)^{\!1/r}\!<\,\varepsilon.\]
\end{definition}
By \cite[Theorem 5]{MusialSagher2004}, the function $F$ in Definition \ref{def1} is unique up to an additive constant, so, putting $F(a)=0$, we can consider the indefinite HK$_r$-integral
\[F(x)=(\textup{HK}_{r}) \int_{a}^{x}f,\qquad x\in(a,b].\]
It can be easily checked that the value of HK$_r$-integral does not depend on the values of the function $f$ on a set of measure zero (provided only finite values of $f$ are considered).
\begin{definition}\label{def2}
{\rm A function $F\in L^{r}[a,b]$ is said to be {\em$L^{r}$-continuous at} $x\in[a,b]$ if
\[\lim_{h\rightarrow0}\frac1h\int_{-h}^{h}\lvert F(x+t) -F( x)\rvert^r\mkern1.5mu dt=0.\]
If $F$ is $L^r$-continuous at all $x\in E$, we say that $F$ is {\em L$^r$-continuous on~$E$}}.
\end{definition}
\begin{definition}\label{111}
{\rm  A function $F\in L^{r}[ a,b] $ is said to be {\it$L^r$-differentiable} at~$x$, if there exists a real number $\alpha$ such that
\[\left( \frac{1}{h}\int_{-h}^{h}\vert F( x+t) -F(x) -\alpha t\vert ^{r}\mkern1.5mu dt\right)^{\!{1/r}}=o( h).\]
In this case we say that $\alpha$ is the {\it$L^r$-derivative} of $F$ at $x$ and write $F_r'(x)=\alpha$}.
\end{definition}
It was proved in \cite{MusialSagher2004} that if $F$ is the indefinite HK$_r$-integral of $f$ then $F$ is $L^{r}$-continuous on $[a,b]$, and $F_r'$ exists and is equal to $f$ a.e.\ on $[a,b]$. For $F\in L^r[a,b]$, $x\in[a,b]$, and a tagged interval $(I,x)$ we denote
$$\Delta_rF(I,x)=\left(\frac1{|I|}\int_I|F(y)-F(x)|^r\mkern1.5mu dy\right)^{\!1/r}.$$

Now let us recall the notion of $L^r$-variational measure  generated by a function  $F\in L^r[a,b]$; it has been defined in~\cite{MST1}.
\begin{definition}\label{Lrv}
For $F\in L^r[a,b]$, a set $E\subset [a,b]$, and a fixed gauge $\delta$ on~$E$, we define the {\em$(\delta,r)$-variation} of $F$ on $E$ by
\begin{equation}\label{var}
\operatorname{Var}\mkern1.5mu(E,F,\delta, r)=\sup\sum_{i=1}^q\Delta_rF(I_i,x_i)
\end{equation}
where the sup is taken over all $\delta$-fine divisions $\{(I_i,x_i)\}_{i=1}^q$ in $[a,b]$ that are tagged in~$E$. The {\em$L^r$-variational measure} of $E\subset [a,b]$ generated by $F\in L^r[a,b]$ is defined by
$${\rm V}^r_{F}(E)=\inf_{\delta}\operatorname{Var}\mkern1.5mu(E,F,\delta, r)$$
where the inf is taken over all gauges on $E$. A variational measure ${\rm V}^r_F$ is said to be {\em absolutely continuous} on~$E$ (with respect to the Lebesgue measure~$\mu$) if ${\rm V}^r_{F}(N)=0$ for every $N\subset E$ such that $\mu(N)=0$. Note that, in particular, this condition applied to $N=\{x\}$ says ${\Delta_rF(I,x)\to0}$ as $|I|\to0$, i.e., that $F$ is $L^r$-continuous at~$x$ (and so on~$E$).
\end{definition}
The following descriptive characterization of the indefinite HK$_r$-integral in terms of $L^r$-variational measure  was given in~\cite{MST1}:
\begin{theorem}\label{chHKr} A function $f\colon[a,b]\to\mathbb R$ is HK$_r$-integrable if and only if there exists a function $F$ on $[a,b]$ with absolutely continuous ${\rm V}^r_{F}$ and such that $F_r'=f$ a.e.; the function $x\mapsto F(x)-F(a)$ is the indefinite HK$_r$-integral of~$f$.
\end{theorem}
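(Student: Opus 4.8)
The plan is to prove the two implications separately, exploiting the fact (established in \cite{MusialSagher2004} and recalled above) that the indefinite HK$_r$-integral is automatically $L^r$-continuous and satisfies $F_r'=f$ almost everywhere. Thus for necessity the only genuinely new assertion is the absolute continuity of ${\rm V}^r_F$, while for sufficiency I must rebuild the defining sum of Definition~\ref{def1} out of the derivative condition together with absolute continuity. The bridge between the two formulations is the elementary Minkowski estimate
\[
\Delta_rF(I,x)\le\Big(\frac1{|I|}\int_I\lvert F(y)-F(x)-f(x)(y-x)\rvert^r\,dy\Big)^{1/r}+\lvert f(x)\rvert\,|I|,
\]
valid because $\lvert y-x\rvert\le|I|$ on $I$: the first term on the right is exactly the summand appearing in Definition~\ref{def1}, and the second is a first-order correction that I will control using the smallness of $|I|$ on null sets.

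For necessity, let $F$ be the indefinite HK$_r$-integral of $f$ and let $N\subset[a,b]$ be null. I would split $N=\bigcup_k N_k$ with $N_k=\{x\in N:k-1\le\lvert f(x)\rvert<k\}$ and treat each $N_k$ separately. Fixing $\varepsilon>0$, on $N_k$ I combine two gauges: the HK$_r$-gauge furnished by Definition~\ref{def1} for the tolerance $\varepsilon2^{-k-1}$, and a gauge forcing every tagged interval to lie inside an open set $G_k\supset N_k$ of measure $\mu(G_k)<\varepsilon2^{-k-1}/k$. For a division tagged in $N_k$ and fine with respect to this combined gauge, the displayed inequality bounds the $\Delta_r$-sum by $\varepsilon2^{-k-1}+k\sum|I_i|\le\varepsilon2^{-k-1}+k\mu(G_k)<\varepsilon2^{-k}$, since the non-overlapping intervals sit in $G_k$. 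Assembling these into a single gauge on $N$ then yields $\operatorname{Var}(N,F,\delta,r)<\varepsilon$, whence ${\rm V}^r_F(N)=0$, i.e.\ the required absolute continuity.

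For sufficiency, assume ${\rm V}^r_F$ is absolutely continuous and $F_r'=f$ on a set $E$ of full measure, and fix $\varepsilon>0$. For a tag $x\in E$ the definition of the $L^r$-derivative provides $\delta(x)>0$ so that $\big(\frac1h\int_{-h}^h\lvert F(x+t)-F(x)-f(x)t\rvert^r\,dt\big)^{1/r}<\frac{\varepsilon}{2(b-a)}\,h$ for $0<h<\delta(x)$; writing $h=\max(x-c,d-x)\le|I|$ for $I=[c,d]\ni x$, monotonicity of the integral then bounds the summand of Definition~\ref{def1} by $\frac{\varepsilon}{2(b-a)}|I|$, so the part of the sum with tags in $E$ does not exceed $\varepsilon/2$. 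For tags in the null set $N=[a,b]\setminus E$ I again invoke the Minkowski estimate: absolute continuity supplies a gauge making the $\Delta_r$-sum over $N$ smaller than $\varepsilon/4$, while the first-order term $\sum\lvert f(x_i)\rvert|I_i|$ is forced below $\varepsilon/4$ by the same decomposition of $N$ according to the size of $f$ together with small open covers, exactly as in the necessity argument. Patching the gauge on $E$ with that on $N$ gives a single gauge witnessing the condition of Definition~\ref{def1}, so $f$ is HK$_r$-integrable; uniqueness of the primitive \cite[Theorem~5]{MusialSagher2004} identifies $x\mapsto F(x)-F(a)$ as its indefinite integral.

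The main obstacle, common to both implications, is the treatment of tags lying in the exceptional null set, where neither the integrability sum nor the $L^r$-derivative controls $F$ directly. The resolution is the interplay I have emphasized: absolute continuity of ${\rm V}^r_F$ annihilates the genuine $\Delta_r$-variation there, while the decomposition of the null set by the magnitude of $f$, combined with open covers of small measure, tames the first-order term $\lvert f(x)\rvert|I|$. Verifying that gauges defined piecewise on $E$ and on the sets $N_k$ glue into one gauge on $[a,b]$, and that the resulting partial sums really do split along these sets, is routine but is the technical heart of the estimate.
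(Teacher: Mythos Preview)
The paper does not supply its own proof of this theorem; it is quoted from \cite{MST1} as an established result and used as a tool. So there is nothing in the present paper to compare your argument against.

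That said, your proposal is correct and is the natural argument for such a descriptive characterization. The Minkowski estimate you display is precisely the bridge between the summand of Definition~\ref{def1} and the variational quantity $\Delta_rF(I,x)$, and the stratification $N=\bigcup_kN_k$ by the magnitude of $f$, combined with open covers $G_k\supset N_k$ of small measure, is the standard device for absorbing the first-order term $\lvert f(x)\rvert\,|I|$ on a null set. The passage from the symmetric $L^r$-derivative condition at $x\in E$ to the one-sided summand of Definition~\ref{def1} works as you indicate: for $I=[c,d]\ni x$ and $h=\max(x-c,d-x)$ one has $h\le|I|\le2h$, so enlarging the domain of integration from $I$ to $[x-h,x+h]$ and then dividing by $|I|\ge h$ costs nothing. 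The gluing of gauges over $E$ and over the pieces $N_k$ is indeed routine, since a $\delta$-fine division tagged in $[a,b]$ decomposes into finitely many sub-divisions tagged in these sets, each of which is fine for the corresponding local gauge. One harmless simplification you could make in the sufficiency direction: since HK$_r$-integrability is insensitive to the values of $f$ on a null set (as noted after Definition~\ref{def1}), you may redefine $f=0$ on $N$ and drop the $N_k$-decomposition there entirely, leaving only the absolute-continuity gauge to handle tags in $N$.
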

\subsection{Approximate Kurzweil--Henstock integral}
The HK$_r$-integral, as we are going to demonstrate, can be related to the so-called {\em approximate Kurzweil--Henstock integral} ({\em AH-integral}). We avoid here its Riemann-type definition and introduce it via a characterization in terms of corresponding variational measure (similar to Theorem~\ref{chHKr}), which is sufficient for our needs here. More information on this theory can be found in \cite[Chapter 16]{R Gordon}.
\begin{definition}
A measurable set $E\subset\mathbb R$ is said to have a {\em density point} at $x\in\mathbb R$, if
$$\frac{\mu(E\cap[x-h,x+h])}{2h}\to1,\quad h\searrow0.$$
Analogously {\em lefthand} and {\em righthand density points} are defined. A function $F\colon[a,b]\to\mathbb R$ is said to be {\em approximately continuous} at $x\in(a,b)$ if for each $\varepsilon>0$, the set $\{y\in[a,b]:|F(y)-F(x)|<\varepsilon\}$ has $0$ as a density point. Analogously unilateral modes of this notion are defined. The corresponding notion of approximate differentiability and approximate derivative $F'_\textup{ap}$ one defines in the same fashion.
\end{definition}
An {\em approximate full cover} (AFC) on $E\subset\mathbb R$ is a collection $\mathcal S=\{\mathcal S_x\}_{x\in E}$ of measurable sets $\mathcal S_x\ni x$ having a density point at~$x$, for all $x\in E$.

For an AFC $\mathcal S=\{\mathcal S_x\}_{x\in E}$, we say a tagged interval $([y,z],x)$ is $\mathcal S$-fine if $y,z\in\mathcal S_x$. Analogously we understand $\mathcal S$-fine divisions. The following definition is analogous to Definition~\ref{Lrv}. Let $F\colon\mathbb R\to\mathbb R$, $E\subset\mathbb R$, and $\mathcal S$ be an AFC on~$E$. Set
$$\operatorname{Var}_{\textup{ap}}(E,F,\mathcal S)=\sup\sum_{i=1}^q|F(z_i)-F(y_i)|,$$
where sup ranges over all $\mathcal S$-fine divisions $\{([y_i,z_i],x_i)\}_{i=1}^q$ tagged in $E$, and
$${\rm V}^{\textup{ap}}_{F}(E)=\inf_{\mathcal S}\operatorname{Var}_{\textup{ap}}(E,F,\mathcal S).$$
The latter is called the {\em approximate variational measure} of $E$ generated by $F$. The approximate Kurzweil--Henstock integral can be characterized in terms of ${\rm V}^{\textup{ap}}_{F}$ (like the HK$_r$-integral in Theorem~\ref{chHKr}).
\begin{definition}\label{chAH}
We say a function $f\colon[a,b]\to\mathbb R$ is AH-integrable if there exists a function $F$ on $[a,b]$ with absolutely continuous ${\rm V}^{\textup{ap}}_{F}$ such that the approximate derivative $F_{\textup{ap}}'=f$ a.e.; the function $x\mapsto F(x)-F(a)$ is the indefinite AH-integral of~$f$.
\end{definition}
Note that the following result is true: every $F$ with absolutely continuous ${\rm V}^{\textup{ap}}_{F}$ is almost everywhere approximately differentiable, so that {\em the class of indefinite AH-integrals coincides with that of functions $F$, $F(a)=0$, generating absolutely continuous ${\rm V}^{\textup{ap}}_{F}$}; see \cite{ene,ss}.
\subsection{Khintchine and Denjoy--Khintchine integrals}
\begin{definition}
An $F\colon[a,b]\to\mathbb R$ is said to be an {\em ACG-function} if $[a,b]=\bigcup_{n=1}^\infty D_n$, where $F\restriction D_n$ is absolutely continuous for all~$n$. If, moreover, all $D_n$ can be chosen closed, we say $F$ is an {\em$[$ACG$]$-function}.
\end{definition}
\begin{definition}
A function $f\colon[a,b]\to\mathbb R$ is said to be Khintchine/Denjoy--Khintchine integrable if there exists a continuous ACG-function $F\colon[a,b]\to\mathbb R$ such that $F'(x)=f(x)$ (resp.\ $F'_{\textup{ap}}(x)=f(x)$) at almost all $x\in[a,b]$. One then defines $\int_a^bf=F(b)-F(a)$.
\end{definition}
\section{Results}
\begin{lm}\label{oei}
Every function $F\colon[a,b]\to\mathbb R$, $F\in L^r$, with absolutely continuous ${\rm V}^r_{F}$ generates also absolutely continuous ${\rm V}^{\textup{ap}}_{F}$.
\end{lm}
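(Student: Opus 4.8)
The plan is to verify the defining property of absolute continuity directly: show that ${\rm V}^{\textup{ap}}_{F}(N)=0$ for every $N\subset[a,b]$ with $\mu(N)=0$. Fix such an $N$ and an $\varepsilon>0$. By hypothesis ${\rm V}^{r}_{F}$ is absolutely continuous, so ${\rm V}^{r}_{F}(N)=0$ and there is a gauge $\delta$ on $N$ with $\operatorname{Var}(N,F,\delta,r)<\varepsilon$. The whole task is to manufacture out of $\delta$ an AFC $\mathcal S=\{\mathcal S_{x}\}_{x\in N}$ for which $\operatorname{Var}_{\textup{ap}}(N,F,\mathcal S)$ is comparably small; since $\varepsilon$ is arbitrary this gives ${\rm V}^{\textup{ap}}_{F}(N)=0$.

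The construction I would use is to place the endpoints on the ``good set'' where the pointwise deviation is controlled by the $L^{r}$-mean deviation. Concretely, for $x\in N$ set
\[\mathcal S_{x}=\{x\}\cup\bigl\{z\in(x-\delta(x),x+\delta(x)):|F(z)-F(x)|\le 2\,\Delta_{r}F([x,z]\text{ or }[z,x],x)\bigr\}.\]
Given any $\mathcal S$-fine division $\{([y_{i},z_{i}],x_{i})\}$ tagged in $N$, split each tagged interval at its tag (legitimate since $x_{i}\in\mathcal S_{x_{i}}$). The pieces $([y_{i},x_{i}],x_{i})$, $([x_{i},z_{i}],x_{i})$ are pairwise non-overlapping, tagged in $N$, and contained in $(x_{i}-\delta(x_{i}),x_{i}+\delta(x_{i}))$, hence form a $\delta$-fine division tagged in $N$; consequently their total $\Delta_{r}$-sum is at most $\operatorname{Var}(N,F,\delta,r)<\varepsilon$. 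On the other hand the defining inequality of $\mathcal S_{x}$ charges each endpoint increment directly to these $\Delta_{r}$-terms, so
\[\sum_{i}|F(z_{i})-F(y_{i})|\le\sum_{i}\bigl(|F(z_{i})-F(x_{i})|+|F(x_{i})-F(y_{i})|\bigr)\le 2\operatorname{Var}(N,F,\delta,r)<2\varepsilon.\]
Thus $\operatorname{Var}_{\textup{ap}}(N,F,\mathcal S)\le 2\varepsilon$, closing the argument — provided $\mathcal S$ is a genuine AFC.

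The genuine difficulty, and the step I expect to be the main obstacle, is therefore showing that $\mathcal S_{x}$ has $x$ as a density point, i.e.\ that the \emph{bad} set $\{z:|F(z)-F(x)|>2\,\Delta_{r}F([x,z],x)\}$ has density zero at $x$. Absolute continuity of ${\rm V}^{r}_{F}$ applied to singletons makes $F$ $L^{r}$-continuous at $x$, whence $\Delta_{r}F([x,z],x)\to0$ as $z\to x$; by Chebyshev's inequality the mere level sets $\{|F-F(x)|<\eta\}$ are already density-one, so smallness of the increments is automatic. What is \emph{not} automatic is the domination of the pointwise value by the local $L^{r}$-mean: bare $L^{r}$-continuity permits tall thin spikes of diminishing height accumulating at $x$, for which the domination fails on a set of positive density. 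It is exactly here that the full strength of the hypothesis — not merely pointwise $L^{r}$-continuity or a.e.\ $L^{r}$-differentiability, which may fail throughout the null set $N$ — must be invoked: by Theorem~\ref{chHKr} such spiky behaviour is incompatible with absolute continuity of ${\rm V}^{r}_{F}$, and I expect the density-zero estimate for the bad set to follow from a maximal-function/Vitali type bound on the ratio $|F(\cdot)-F(x)|\big/\Delta_{r}F([x,\cdot],x)$, or, if a pointwise argument at individual points of $N$ proves too delicate, from a covering argument that handles $N$ collectively through the absolutely continuous measure ${\rm V}^{r}_{F}$. Establishing this density-point property is the crux; everything else is the routine bookkeeping above.
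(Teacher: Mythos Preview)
Your bookkeeping is correct: splitting each $\mathcal S$-fine tagged interval at its tag yields a $\delta$-fine division tagged in $N$, and if $\mathcal S$ were a genuine AFC the estimate $\operatorname{Var}_{\textup{ap}}(N,F,\mathcal S)\le 2\varepsilon$ would follow exactly as you wrote. The gap is where you flagged it, but it is not a matter of supplying a missing argument---the set $\mathcal S_{x}$ you wrote down simply need not have $x$ as a density point, even for $F$ satisfying every hypothesis in sight. Take $x=0$ and $F(z)=z^{\alpha}$ on $[0,1]$. Then
\[
\Delta_{r}F([0,z],0)=\Bigl(\tfrac1z\int_0^z t^{\alpha r}\,dt\Bigr)^{1/r}=\frac{z^{\alpha}}{(\alpha r+1)^{1/r}},
\]
so your defining inequality $|F(z)-F(0)|\le 2\,\Delta_{r}F([0,z],0)$ reads $(\alpha r+1)^{1/r}\le 2$ and fails for \emph{every} $z>0$ once $\alpha r+1>2^{r}$ (e.g.\ $\alpha=2$, $r=1$, where it says $z^{2}\le\tfrac23 z^{2}$). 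Hence $\mathcal S_{0}=\{0\}$. Replacing the constant $2$ by any $C$ only moves the threshold to $\alpha>(C^{r}-1)/r$. These $F$ are smooth and certainly have absolutely continuous ${\rm V}^{r}_{F}$, so no appeal to the ``full strength of the hypothesis'' can rescue this particular $\mathcal S_{x}$; the construction itself must change.

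The paper's device is to use a \emph{fixed} threshold at each point rather than one that shrinks with $z$. With $h=\delta(x)$ and $\omega_{x}(h)=\Delta_{r}F(\langle x,x+h\rangle,x)$ it puts
\[
\mathcal S_{x}=x+\bigl\{t\in[0,h):|F(x+t)-F(x)|\le\omega_{x}(h)\bigr\}\cup\bigl\{t\in(-h,0]:|F(x+t)-F(x)|\le\omega_{x}(-h)\bigr\}.
\]
Because the level $\omega_{x}(\pm h)$ does not depend on $t$, the density-one property now drops out of $L^{r}$-continuity by a straight Chebyshev estimate: if the complement met some $[0,k]$ in measure $>\eta k$, integrating $|F(x+\cdot)-F(x)|^{r}$ over that set would force $k\,\omega_{x}(k)^{r}\ge\eta k\,\omega_{x}(h)^{r}$, impossible for arbitrarily small $k$ since $\omega_{x}(k)\to0$. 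This fixed-level idea is precisely the missing ingredient in your plan.
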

\begin{proof}
Given $x\in[a,b]$, denote for brevity $$\omega_x(h)=\Delta_rF(\langle x,x+h\rangle,x),\quad\ h\ne0,\ \langle x,x+h\rangle\subset[a,b].$$
We prove that if $x\in[a,b)$, $h>0$, the set
\begin{equation}\label{sci}
S_x(h)=\{t\in\langle0,h\rangle:|F(x+t)-F(x)|\le\omega_x(h)\}
\end{equation}
has $0$ as a righthand density point. If this were not true, for some $\eta>0$ arbitrarily small $k>0$ with $|C_k|>\eta k$, where $$C_k=\{t\in[0,k]:|F(x+t)-F(x)|>\omega_x(h)\},$$ should be found. For such a $k>0$, integrating $|F(x+t)-F(x)|^r$ over $C_k$ produces $k\cdot\omega_x(k)^r\ge\eta k\cdot\omega_x(h)^r$, i.e., $\omega_x(k)^r\ge\eta\cdot\omega_x(h)^r$, which can't hold (due to $L^r$-con\-ti\-nuity of~$F$) for arbitrarily small $k<h$ as long as $F$ isn't a.e.\ constant in some righthand neighborhood of~$x$ (since only then one could have $\omega_x(h)=0$). Similarly we prove that for $x\in(a,b]$ and $h<0$ the set \eqref{sci} has $0$ as a lefthand density point.

\smallskip
Assume now ${\rm V}^r_F$ is absolutely continuous and consider a nullset $N\subset[a,b]$. Then, given $\varepsilon>0$, there is a gauge $\delta$ such that if a division $\{(\langle x_i,y_i\rangle,x_i)\}_{i=1}^k$ is $\delta$-fine, tagged in~$N$, then $\sum_{i=1}^k\omega_{x_i}(y_i-x_i)<\varepsilon$. Let the sets $S_x(h),S_x(-h)$ be defined at $x$ with respect to $h=\delta(x)$. We set $\mathcal S_x=x+(S_x(h)\cup S_x(-h))$, $x\in N$. From what we have proved above, $\mathcal S_x$ has $x$ as a density point, i.e., $\mathcal S=\{\mathcal S_x\}_{x\in N}$ is an AFC on~$N$. If $\{([y_j,z_j],x_j)\}_{j=1}^l$ is an $\mathcal S$-fine division tagged in $N$, from the definition of $S_x(h)$ and $S_x(-h)$, it follows
\begin{align*}
\sum_{j=1}^l|F(y_j)-F(z_j)|&\le\sum_{j=1}^l|F(y_j)-F(x_j)|+\sum_{j=1}^l|F(x_j)-F(z_j)|\\
&\le\sum_{j=1}^l\omega_{x_j}(x_j-y_j)+\sum_{j=1}^l\omega_{x_j}(z_j-x_j)<\varepsilon+\varepsilon.
\end{align*}
That is,  $\operatorname{Var}_{\textup{ap}}(N,F,\mathcal S)\le2\varepsilon$ and so, since $\varepsilon>0$ was arbitrary, ${{\rm V}^{\textup{ap}}_{F}(N)=0}$.
\end{proof}
A straightforward consequence of the above argument is \cite[Theorem~6]{MusialSagher2004}.
\begin{remark}\label{gbuegnioe}
If $F\colon[a,b]\to\mathbb R$, $F\in L^r$, is $L^r$-continuous at $x\in[a,b]$, then it is approximately continuous at~$x$.
\end{remark}
\begin{wn}\label{ehbwb}
Every HK$_r$-integrable function is AH-integrable and both integrals coincide.
\end{wn}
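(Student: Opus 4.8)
The plan is to read both integrals off \emph{the same} primitive $F$, using the descriptive characterizations already in hand. First I would take $f$ to be HK$_r$-integrable and invoke Theorem~\ref{chHKr}: there is a function $F$ with ${\rm V}^r_F$ absolutely continuous, $F_r'=f$ a.e., and $x\mapsto F(x)-F(a)$ equal to the indefinite HK$_r$-integral of~$f$. Normalizing $F(a)=0$, Lemma~\ref{oei} at once gives that this same $F$ generates an absolutely continuous ${\rm V}^{\textup{ap}}_F$. By Definition~\ref{chAH}, the only thing left to verify before concluding AH-integrability is that $F$ is approximately differentiable a.e.\ with $F'_{\textup{ap}}=f$.

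This reduces the whole matter to a single pointwise implication: \emph{if $F_r'(x)=\alpha$ then $F'_{\textup{ap}}(x)=\alpha$.} I would prove it by the same Chebyshev-type estimate that drives Lemma~\ref{oei}, now with a scale-dependent threshold. Writing $G(t)=F(x+t)-F(x)-\alpha t$, the hypothesis $F_r'(x)=\alpha$ reads: for every $\varepsilon'>0$ there is $s_0>0$ with $\int_{-s}^{s}|G|^r\le\varepsilon' s^{r+1}$ whenever $0<s\le s_0$. To obtain approximate differentiability I must show that for each $\varepsilon>0$ the set $\{t:|G(t)|>\varepsilon|t|\}$ has $0$ as a density point. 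Fixing $h\le s_0$, I would split $(0,h]$ into dyadic rings $I_n=(2^{-n-1}h,2^{-n}h]$; on $I_n$ the inequality $|G(t)|>\varepsilon t$ forces $|G(t)|>\varepsilon 2^{-n-1}h$, so Chebyshev bounds the bad part of $I_n$ by $(\varepsilon 2^{-n-1}h)^{-r}\int_{I_n}|G|^r$. Since every scale $2^{-n}h\le s_0$, the uniform estimate gives $\int_{I_n}|G|^r\le\varepsilon'(2^{-n}h)^{r+1}$, and a short computation shrinks the bad part of $I_n$ to a constant multiple of $\varepsilon'\varepsilon^{-r}2^{-n}h$. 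Summing the geometric series in $n$ shows the total bad measure in $(0,h]$ is at most $c\,\varepsilon'\varepsilon^{-r}h$; as $\varepsilon'$ is arbitrary this is $o(h)$. The left side is symmetric, so $t=0$ is a density point of the good set and $F'_{\textup{ap}}(x)=\alpha$.

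With $F'_{\textup{ap}}=F_r'=f$ a.e.\ established, Definition~\ref{chAH} yields that $f$ is AH-integrable and that $x\mapsto F(x)-F(a)$ is \emph{also} its indefinite AH-integral. Since one and the same normalized primitive $F$ serves as indefinite integral in both theories, the two values agree: $(\textup{HK}_r)\int_a^b f=F(b)-F(a)=(\textup{AH})\int_a^b f$. The genuinely new work is confined to the pointwise derivative comparison of the second paragraph; everything else is bookkeeping with the cited characterizations. I expect the main obstacle to be exactly that comparison—specifically, handling the $t$-dependent threshold $\varepsilon|t|$ cleanly—but since the dyadic decomposition converts it into a summable geometric estimate mirroring Lemma~\ref{oei}, no essential difficulty should remain.
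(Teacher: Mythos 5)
Your argument is correct, and its skeleton coincides with the paper's: invoke Theorem~\ref{chHKr} to get a primitive $F$ with absolutely continuous ${\rm V}^r_F$ and $F_r'=f$ a.e., apply Lemma~\ref{oei} to conclude ${\rm V}^{\textup{ap}}_F$ is absolutely continuous, reduce the rest to the pointwise implication that $L^r$-differentiability at a point gives approximate differentiability with the same value, and finish with Definition~\ref{chAH}, observing that one normalized primitive serves both theories, so the integrals agree. The single point where you genuinely diverge is that pointwise implication: the paper does not prove it but quotes it from the literature (Theorem~2 of \cite{L Gordon}), whereas you supply a self-contained proof. Your dyadic Chebyshev argument checks out: on $I_n=(2^{-n-1}h,2^{-n}h]$ the threshold $\varepsilon|t|$ exceeds $\varepsilon 2^{-n-1}h$, the hypothesis $\int_{-s}^{s}|G|^r\le\varepsilon's^{r+1}$ applied at scale $s=2^{-n}h\le s_0$ bounds the bad part of $I_n$ by $2^{r}\varepsilon'\varepsilon^{-r}\cdot 2^{-n}h$, and summing the geometric series gives bad measure at most $2^{r+1}\varepsilon'\varepsilon^{-r}h$ in $(0,h]$ uniformly for $h\le s_0$; since $\varepsilon'$ is arbitrary, the bad set has density zero at $0$, and the left half is symmetric. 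So your version buys self-containedness (the only external inputs are the paper's own Theorem~\ref{chHKr}, Lemma~\ref{oei}, and Definition~\ref{chAH}), at the cost of the extra computation; the paper's version buys brevity by outsourcing exactly that lemma to Gordon. Substantively the two proofs are the same reduction, differing only in whether the key pointwise comparison of derivatives is proved or cited.
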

\begin{proof}
Let $f\colon[a,b]\to\mathbb R$ be HK$_r$-integrable, $F=\int f$. By Theorem~\ref{chHKr}, $F'_r=f$ almost everywhere in $[a,b]$. At every point where $F$ is $L^r$-differentiable, it is approximately differentiable with the same derivative (see \cite[Theorem~2]{L Gordon}), that is, $F'_{\textup{ap}}(x)=f(x)$ at almost every $x\in[a,b]$. By Theorem \ref{chHKr} and Lemma~\ref{oei}, ${\rm V}_F^\textup{ap}$ is absolutely continuous, so $f$ is AH-integrable with ${\int f=F}$.
\end{proof}
In consequence, in view of \cite[Theorem 2.1]{Lu1}, we have a slight improvement of \cite[Corollary~1]{MusialSagher2004}.
\begin{wn}\label{eluacg}
Each indefinite HK$_r$-integral is an $[$ACG$]$-function.
\end{wn}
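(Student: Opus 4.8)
The plan is to reduce the statement to a descriptive property of the approximate Kurzweil--Henstock integral and then to quote the corresponding structure theorem. First I would invoke Corollary~\ref{ehbwb}: if $F$ is the indefinite HK$_r$-integral of $f$ (normalized by $F(a)=0$), then $F$ is the indefinite AH-integral of the same $f$. By Definition~\ref{chAH} and the remark following it, this is equivalent to saying that $F$ generates an \emph{absolutely continuous} approximate variational measure ${\rm V}^{\textup{ap}}_F$. In this way the problem is detached from the rigid $L^r$-means and recast entirely in the language of the approximate variation, which is exactly the setting treated in~\cite{Lu1}.

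Next I would appeal to the descriptive characterization of indefinite AH-integrals, \cite[Theorem~2.1]{Lu1}: a function whose approximate variational measure is absolutely continuous is necessarily an $[$ACG$]$-function. Applied to our $F$, this yields a decomposition $[a,b]=\bigcup_n D_n$ with each $D_n$ \emph{closed} and $F\restriction D_n$ absolutely continuous, which is precisely the asserted conclusion. I would stress that the whole gain over \cite[Corollary~1]{MusialSagher2004} resides in the word ``closed'': the earlier argument produces a plain ACG-function, whereas routing the statement through the approximate theory supplies the bracketed (closed-pieces) version for free.

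Should one prefer a self-contained route avoiding \cite{Lu1}, I would reconstruct the structure theorem directly from absolute continuity of ${\rm V}^{\textup{ap}}_F$. The idea is to stratify $[a,b]$: for each $n$ let $E_n$ collect the tags $x$ at which the approximate fineness can be realized by intervals on which the increments $|F(z)-F(y)|$ are controlled at scale $1/n$, pass to the closures $D_n=\domk E_n$, and check that, modulo a null set (on which ${\rm V}^{\textup{ap}}_F$ vanishes and hence contributes nothing), $F$ has bounded variation, and in fact is absolutely continuous, on each $D_n$. The main obstacle is exactly this last passage, from approximate, density-point fineness to an honest estimate of the \emph{ordinary} variation of $F$ on a \emph{closed} set. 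To bridge it I would use that at a density point of the relevant cover the approximate and ordinary intervals interlace via a Vitali-type selection, so that sums over $\mathcal S$-fine divisions tagged in $E_n$ dominate ordinary variation sums along $D_n$ up to the negligible contribution of the exceptional null set; absolute continuity of $F\restriction D_n$ on subsets of small measure then descends from that of ${\rm V}^{\textup{ap}}_F$. Controlling the boundary points adjoined when forming the closures, and arranging the $D_n$ to be genuinely closed while preserving these estimates, is the delicate point where care is required.
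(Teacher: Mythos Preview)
Your argument is correct and matches the paper's own derivation: the corollary is stated as an immediate consequence of Corollary~\ref{ehbwb} together with \cite[Theorem~2.1]{Lu1}, exactly as in your first two paragraphs. The optional self-contained sketch in your third paragraph goes beyond what the paper does, but your primary route is the intended one.
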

We show next that the HK$_r$-integral does not cover the Denjoy--Khintchine integral. Namely, we prove the following
\begin{theorem}\label{main}
There exists a function which is Khintchine (and so Denjoy--Khintchine) integrable on $[0,1]$ but which is HK$_r$-integrable on $[0,1]$ for no~$r$.
\end{theorem}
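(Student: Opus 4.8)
The plan is to combine the descriptive characterization (Theorem~\ref{chHKr}) with the uniqueness of the Denjoy--Khintchine primitive so as to reduce the theorem to one construction. Concretely, I would exhibit a \emph{continuous} $[$ACG$]$-function $F\colon[0,1]\to\mathbb R$, differentiable almost everywhere, whose $L^r$-variational measure ${\rm V}^r_F$ fails to be absolutely continuous \emph{for every} $r\ge1$; then $f:=F'$ is the function sought. Indeed, $F$ being continuous $[$ACG$]$ with $F'=f$ a.e.\ makes $f$ Khintchine integrable with primitive $F$. On the other hand, were $f$ HK$_r$-integrable for some $r$, its indefinite HK$_r$-integral $G$ would be an approximately continuous $[$ACG$]$-function (Remark~\ref{gbuegnioe} and Corollary~\ref{eluacg}) with $G'_{\textup{ap}}=f=F'_{\textup{ap}}$ a.e.; then $F-G$ is an approximately continuous ACG-function with a.e.\ vanishing approximate derivative, and the classical theorem that such a function is constant forces $G=F+\textup{const}$, whence ${\rm V}^r_F={\rm V}^r_G$ would be absolutely continuous --- a contradiction. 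So everything reduces to producing such an $F$.

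For the construction I would take $N$ to be the ternary Cantor set and build $F$ from \emph{tents}: on each complementary gap of length $3^{-n}$ (there being $2^{n-1}$ of them at level $n$) let $F$ rise linearly to the height $2^{-n}$ at the midpoint and return to $0$ at the endpoints, and set $F\equiv0$ on $N$. As the tent heights tend to $0$, $F$ is continuous; being piecewise linear on each gap-closure and constant on $N$, it is $[$ACG$]$, and $F'$ exists a.e. The calibration of height $2^{-n}$ against length $3^{-n}$ is the decisive point: it makes the construction self-similar with the Cantor measure, so that on a level-$n$ black interval $I_n$ (of length $3^{-n}$ and Cantor measure $2^{-n}$) the graph of $F$ is a copy of the whole graph scaled horizontally by $3^{-n}$ and vertically by $2^{-n}$.

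The heart of the matter is the lower bound ${\rm V}^r_F(N)>0$. Tagging $I_n$ at any point $x\in N\cap I_n$ (so that $F(x)=0$), self-similarity yields $\Delta_rF(I_n,x)=\bigl(3^{n}\!\int_{I_n}|F|^r\bigr)^{1/r}=2^{-n}\|F\|_{L^r[0,1]}$, independently of the tag's position, whereas the Cantor measure of $I_n$ equals $2^{-n}$. Consequently, for \emph{any} gauge $\delta$ on $N$ I would, for each $x\in N$, pick a black interval $I(x)\ni x$ with $|I(x)|<\delta(x)$, so that $(I(x),x)$ is $\delta$-fine, and then extract from the nested family $\{I(x)\}_{x\in N}$ a finite disjoint subfamily covering all but Cantor measure $\varepsilon$ of $N$. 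Summing, $\sum_k\Delta_rF(I(x_k),x_k)=\|F\|_{L^r}\sum_k2^{-\operatorname{level}(x_k)}\ge\|F\|_{L^r}(1-\varepsilon)$, so $\operatorname{Var}(N,F,\delta,r)\ge\|F\|_{L^r}(1-\varepsilon)$; letting $\varepsilon\to0$ and then taking the infimum over $\delta$ gives ${\rm V}^r_F(N)\ge\|F\|_{L^r[0,1]}>0$. Since $\mu(N)=0$, this shows that ${\rm V}^r_F$ is not absolutely continuous, for every $r\ge1$, which closes the reduction.

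I expect the main obstacle to be exactly this robustness against an arbitrary gauge: producing good divisions at a single fixed level is immediate, but one must secure the contributions at \emph{all} small scales simultaneously, and this is what the self-similar calibration of $2^{-n}$ against $3^{-n}$ together with the Vitali-type selection over the nested Cantor-dyadic intervals is designed to guarantee. A secondary point demanding care is the reduction itself, namely justifying that any conceivable HK$_r$-primitive coincides, up to an additive constant, with the continuous primitive $F$; for this one relies on the approximate continuity of HK$_r$-primitives and on the monotonicity theorem for approximate derivatives.
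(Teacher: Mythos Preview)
Your reduction is the same as the paper's, and your construction is correct; the routes diverge only in the choice of bump heights and in how you defeat an arbitrary gauge. The paper puts a plateau of height $1/n$ on the middle half of each rank-$n$ gap and then uses a Baire category argument: for any gauge $\delta$ on $C$, some $C_{m_0}=\{x:\delta(x)>1/m_0\}$ is dense in a portion of $C$, and inside that portion one picks, for every $l$, the $2^l-1$ rank-$(n+l)$ gaps tagged at nearby points of $C_{m_0}$; this yields $\operatorname{Var}(C,F,\delta,r)\ge(2^l-1)/(4^{1/r}(n+l))\to\infty$, so ${\rm V}^r_F(C)=\infty$. Your self-similar calibration $2^{-n}$ against $3^{-n}$ instead gives the clean identity $\Delta_rF(I_n,x)=2^{-n}\|F\|_{L^r}$, and you replace Baire category by a Vitali-type selection over the nested ``black'' intervals (any two are disjoint or comparable, so maximal elements exist and are disjoint, and their Cantor masses sum to $1$); this is an elegant and fully rigorous way to cope with an arbitrary gauge, though it yields only ${\rm V}^r_F(N)\ge\|F\|_{L^r}>0$ rather than $+\infty$. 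Either conclusion suffices to contradict absolute continuity of ${\rm V}^r_F$, so both approaches close the argument; the paper's buys a stronger quantitative statement, yours buys a shorter and coordinate-free estimate.
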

Let us remark that, thanks to Corollary \ref{ehbwb},  the theorem can be deduced from the theory of AH-integration; see e.g.~\cite{Fu1}. However, the argument we provide here has the advantage of being direct.
\begin{proof}
Let $C\subset[0,1]$ be the classical Cantor ternary set (of measure zero) with contiguous intervals $u_n$ of rank $n=1,2,\dots$ having length $u_n=3^{-n}$.\footnote{Here and in what follows, in a slight abuse of notation, we identify intervals $u_n,v_n,r_n$ and their lengths, so that there is no distinction made between different intervals of the same kind and the same rank.} The set that is left after removing all contiguous intervals up to rank $n$ from $[0,1]$, is constituted by $2^n$ segments $r_n$ of length $3^{-n}$ which are called {\em segments of rank}~$n$. Note that each $u_n$ is an interval concentric with some segment~$r_{n-1}$ (we put $r_0=[0,1]$). So each segment~$r_n$, for each nonnegative integer~$l$, can be represented as the union of $2^l$ segments $r_{n+l}$ and $2^l-1$ intervals $u_{n+l}$, all of them being of length~$3^{-(n+l)}$.

We construct a continuous ACG-function $F$ on $[0,1]$, differentiable everywhere outside of~$C$, which is the indefinite Khintchine integral of its derivative  $f=F'$ existing a.e.  We then show that $f$ is not HK$_r$-integrable. For each interval~$u_n$, let $v_n\subset u_n$ be the segment concentric with $u_n$ and of length $u_n/2$. We put $F=0$ on $C$ and $F=1/n$ on every~$v_n$, and then extend $F$ smoothly over both intervals $u_n\setminus v_n$ (in all $u_n$) so that the resulting function $F\ge0$  is continuous on $[0,1]$ and differentiable everywhere outside of~$C$. So we have for each contiguous interval~$u_n$,
\begin{equation}\label{1}
\int_{u_n}\!F^r>\int_{v_n}\!F^r=\frac{v_n}{n^r}=\frac1{2n^r\cdot3^n}.
\end{equation}
We show that the $L^r$-variational measure ${\rm V}^r_{F}$ is not absolutely continuous. Fix arbitrary gauge $\delta$ on $C$ and for $m\in\mathbb N$ denote $C_m=\{x\in C:\delta(x)>1/m\}$, $C=\bigcup_{m=1}^{\infty}C_m$. By the Baire category theorem there exists $m_0$ such that $C_{m_0}$ is metrically dense in some nonempty portion of $C$ defined by an interval $(c,d)$, i.e., $C\cap(c,d)$. We can assume that $d-c<1/{m_0}$. There exists a segment $r_n$ with $r_n\subset(c,d)$. As we have already noted, for each $l$ there are $2^l-1$ intervals $u_{n+l}$ of rank $n+l$ lying within the considered segment~$r_n$. Denote them $(\alpha_k, \beta_k)$, $1\leq k \leq 2^l-1$. Choose for each of them a point $x_k\in C_{m_0}$ which belongs to the segment $r_{n+l}$ adjoining $(\alpha_k, \beta_k)$ on the left. Then $$\beta_k - x_k\leq \frac{2}{3^{n+l}} <\frac1{m_0}<\delta(x_k)$$ and so pairs $([x_k, \beta_k],x_k)$ form a $\delta$-fine division tagged in~$C$. Having in mind \eqref{1} we get
\begin{align*}
\Delta_rF([x_k,\beta_k],x_k)&=\left(\frac1{\beta_k - x_k}\int_{x_k}^{\beta_k}|F(y)-F(x_k)|^r\mkern1.4mu dy\right)^{\!1/r}\\
&>\left(\frac{3^{n+l}}{2}\cdot\frac1{2(n+l)^r\cdot3^{n+l}}\right)^{\!1/r}=\frac1{4^{1/r}(n+l)}.
\end{align*}
Summing up over all $k$ we obtain
\[\operatorname{Var}\mkern1.5mu(C,F,\delta, r)\geq\sum_{k=1}^{2^l-1}\Delta_rF([x_k, \beta_k],x_k)>\frac{2^l-1}{4^{1/r}(n+l)}.\]
As $n$ is fixed and  $l$ is arbitrary we obtain that $\operatorname{Var}\mkern1.6mu(C,F,\delta,r)=\infty$ for any gauge~$\delta$. Hence ${\rm V}^r_{F}(C)=\infty$ and so ${\rm V}^r_{F}$ is not absolutely continuous. This proves that $F$ is not the indefinite HK$_r$-integral for its derivative.

To see that $F'$ is not HK$_r$-integrable it is enough to apply Remark \ref{gbuegnioe} and Corollary \ref{eluacg}. If this were not true, $G=(\textup{HK}_r)\int F'$ would be an approximately continuous [ACG]-function with $F'=G'_{\textup{ap}}$ a.e.\ in $[a,b]$. Thus, from the monotonicity property of [ACG]-functions~\cite{Kub}, $F-G=\textup{const}$, and so $F=G$, a contradiction.
\end{proof}
\begin{remark}\label{e}
An example in the opposite direction, i.e., of an HK$_r$-integrand which is not a Denjoy--Khintchine integrand, can be provided via a discontinuous and everywhere $L^r$-differentiable function (which can be constructed like $F$ in the proof of Theorem~\ref{main}, with $v_n/u_n\to0$ suitably quickly and $F=1$ over each~$v_n$).
\end{remark}
\begin{remark}\label{ee}
Note that an HK$_r$-integrable function whose indefinite integral is {\em continuous} is necessarily Denjoy--Khintchine integrable, while not necessarily Khintchine integrable. An example here is more delicate and can be constructed similarly as a continuous ACG-function which is not a.e.\ differentiable \cite[p.\,224]{saks}: take a Cantor-like set $H\subset[0,1]$ of positive measure with contiguous intervals $I_1,I_2,\ldots$; set $F=|I_n|+\rho_n$ on a segment $J_n\subset I_n$, where $\rho_n$ is the maximal length of a subinterval of $[0,1]$ disjoint to all $I_1,\dots,I_n$, $F=0$ elsewhere. It can be shown as in \cite{saks} that $F$ is not differentiable at any point of $H$, while if $J_n$ are thin enough in $I_n$ it is $L^r$-differentiable everywhere in $H$. Then one can modify $F$ smoothly around endpoints of all $J_n$ so that the resulting function is continuous, while nondifferentiability and $L^r$-differentiability on $H$ are not affected.
\end{remark}

\end{document}